\documentclass[11pt]{amsart}
\usepackage[utf8]{inputenc}
\usepackage{amsmath,amsfonts,amssymb,amscd}
\usepackage{calligra}
\usepackage[hidelinks]{hyperref}
\newcommand{\m}{{\mathcal{M}}}

\newcommand {\OO}{{\mathcal O}}

\newcommand{\C}{\mathbb C}

\newcommand{\MM}{{\calligra M~}}
\newcommand{\I}{\mathcal{I}}
\newtheorem{theorem}{Theorem}

\newtheorem{prop}[theorem]{Proposition}

\theoremstyle{definition}
\newtheorem{defn}[theorem]{Definition}

\begin{document}
\title{Nonprojective \MM-manifolds}
\author{Blake J. Boudreaux}
\address{Department of Mathematical Sciences, University of Arkansas, Fayetteville, AR 72701, USA}
\email{bb225@uark.edu}
\author{Rasul Shafikov}
\address{Department of Mathematics, University of Western Ontario, London, Ontario N6A 5B7, Canada}
\email{shafikov@uwo.ca}
\date{\today}
\begin{abstract}
    In~\cite{BoSh26}, the authors introduced, by analogy with Stein manifolds, a class of complex manifolds called {\calligra M~}-manifolds. This class contains all complex projective manifolds. Moreover, every compact {\calligra M~}-manifold is Moishezon. In this note, we construct examples showing that the class of compact {\calligra M~}-manifolds sits properly between the classes of projective and Moishezon manifolds.
\end{abstract}

\maketitle

Holomorphic convexity and Stein manifolds play a key role in the development of the holomorphic function theory of several complex variables. In~\cite{BoSh25, BoSh26}, the authors began the development of a meromorphic analogue of these two concepts.

Namely, given an arbitrary complex manifold $X$ and a compact set $K\subseteq X$, we define the \textit{meromorphically convex hull of $K$}, denoted $\widehat{K}_X$, by
\begin{equation*}
    \widehat{K}_X=\left\{z\in X\,:\,|f(z)|\leq \|f\|_{K} \text{ for every }f\in\m(X)\cap\OO(K\cup \{z\})\right\} ,
\end{equation*}
where $\m(X)$ is the space of meromorphic functions on $X$.
We say that the complex manifold $X$ is \textit{meromorphically convex} if $\widehat{K}_X$ is compact in $X$ for every compact set $K\subseteq X$. Proposition~4(i) of~\cite{BoSh26} shows that $\widehat{K}_X$ is a closed set, so it follows that every compact complex manifold $X$ is meromorphically convex.

Equipped with the notion of meromorphic convexity, we define the following meromorphic analogue of a Stein manifold, known as an \MM-manifold.
\begin{defn}\label{Mmanifold}
    A complex manifold $X$ is called an \MM-manifold if the following conditions are satisfied:
    \begin{enumerate}
	\item[(a)] $X$ is meromorphically convex, i.e., $\widehat K_{X}$ is a compact subset of $X$ for any compact set $K\subseteq X$;
	\item[(b)] $\m(X)$ separates points, i.e., for any pair of distinct points $p,q\in X$ there exists a meromorphic function $f$ on $X$ such that $f$ is holomorphic near $p$ and $q$, and $f (p)\neq f (q)$;
	\item[(c)] Existence of local coordinates: for any point $p\in X$ there exists a neighborhood $U$ of $p$ and meromorphic functions $f_{1},\ldots,f_{n}$ such that $\left\{f_{1}|_U,\ldots,f_{n}|_U\right\}$ forms a local holomorphic coordinate system on $U$.
    \end{enumerate}
\end{defn}
Basic examples of \MM-manifolds are provided by Stein and projective manifolds, together with blowups of each and Cartesian products of a Stein manifold with a projective manifold~\cite[\textsection 3]{BoSh26}.

A related class is that of Moishezon manifolds. These are compact manifolds for which $\m(X)$, viewed as a field extension over $\C$, has maximal transcendence degree ($=\dim_{\C}X$). This class of manifolds includes all projective manifolds. On the other hand, compact \MM-manifolds are Moishezon~\cite[Proposition~11]{BoSh26}. This gives the following chain of inclusions:
\begin{equation}\label{inclusions}
    \text{projective manifolds}\subseteq \text{compact \MM-manifolds}\subseteq \text{Moishezon manifolds}.
\end{equation}

The primary purpose of this note is to observe the following.
\begin{theorem}\label{proper}
    Both inclusions in~\eqref{inclusions} above are proper.
\end{theorem}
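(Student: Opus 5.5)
We need two examples: a compact \MM-manifold that is not projective, and a Moishezon manifold that is not an \MM-manifold.

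\emph{First inclusion.} The natural candidate is a blowup. The class of \MM-manifolds is closed under blowups (\cite[\textsection 3]{BoSh26}), and blowups along smooth centers of projective manifolds are projective. So the non-projective example should come from a more general modification. The plan is to take Hironaka's classical non-projective Moishezon threefold $X$, or a simpler non-Kähler Moishezon manifold. Hironaka's example is obtained from $\mathbb{P}^3$ by blowing up two smooth curves meeting at two points, in opposite orders near the two points. Its points are then separated by meromorphic functions pulled back from $\mathbb{P}^3$, away from the exceptional loci.

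The work is to verify conditions (b) and (c) of Definition~\ref{Mmanifold}; condition (a) holds automatically by compactness. For (c), near a point of an exceptional divisor, local coordinates on each blowup chart are quotients of affine coordinates of $\mathbb{P}^3$ and equations of the curves. Over each point of $\mathbb{P}^3$ the construction is locally an iterated blowup along smooth centers, so near any point of $X$ it looks like an iterated blowup of a projective manifold. The chart coordinates should therefore be realized by global rational functions, namely ratios of homogeneous polynomials defining the curves, since the curves are algebraic.

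For (b), two points in the same fiber over $\mathbb{P}^3$ are distinguished by such ratio functions, because these functions restricted to the fiber give coordinates on it. Points in different fibers are distinguished by pullbacks of affine coordinates.

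The main obstacle is this local realizability. Near the two bad points the local model of the blowup order differs, and I must check that ratios of global polynomial equations still give chart coordinates there. I would write out the chart computation for blowing up $\{x=y=0\}$ and then the strict transform of $\{x=z=0\}$, and check that the resulting coordinates are, e.g., $y/x$, $x$, $z/x$ (or similar), hence rational in global terms.

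\emph{Second inclusion.} Here I need a Moishezon manifold on which meromorphic functions fail (b) or (c). A key observation is that (b) and (c) are properties of the field $\m(X)$ together with its base locus. A candidate is a Moishezon manifold containing a curve $C$ contracted by every meromorphic map, or along which the meromorphic functions all have indeterminacy in a way that prevents local coordinates. Examples include a small resolution of a nodal threefold that is not projective, or the Moishezon manifold obtained as an algebraic space with a non-projective contraction.

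More concretely, I would use a Moishezon $X$ with a bimeromorphic morphism $X\to Y$ onto a projective variety with exceptional curve $C\cong\mathbb{P}^1$ and normal bundle $\OO(-1)^2$. Since $\m(X)=\m(Y)$, every meromorphic function on $X$ is pulled back from $Y$. At a point $p\in C$, a function holomorphic near $p$ is then constant along $C$, which should violate (b) for two points of $C$.

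The obstacle is that a meromorphic $f=g\circ\pi$ may be indeterminate at $\pi(C)$ on $Y$ yet holomorphic near $p$ on $X$. I would first try to rule this out by checking whether the resolution of indeterminacy of $f$ can be holomorphic near $p$ without its graph separating points of $C$. I expect this to require choosing $X$ so that $C$ is not separated even by such $f$, e.g.\ by arranging that $X$ admits no divisor meeting $C$ positively in the required sense.
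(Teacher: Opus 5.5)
Your first half uses the paper's example (Hironaka's $B$) and is workable. The chart computation you flag as the main obstacle is unnecessary, though. Since $\pi_{12}^*:\m(X)\to\m(X_{12})$ is an isomorphism, every $f\in\m(X_{12})$ equals $g\circ\pi_{12}$ for some $g\in\m(X)$, and $g\circ\pi\in\m(B)$ agrees with $f$ on $B\setminus\pi^{-1}(q)$. Conditions (b) and (c) near $\pi^{-1}(p)$ are therefore inherited from the projective manifold $X_{12}$, and near $\pi^{-1}(q)$ from $X_{21}$. This also handles points on the two different components $\ell_0,\ell_1$ of the fibre over $p$, which a single ``coordinate on the fibre'' would not.

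The genuine gap is in the second half. The step ``since $\m(X)=\m(Y)$, a function holomorphic near $p\in C$ is constant along $C$'' is false in general. A $g\in\m(Y)$ that is indeterminate at $\pi(C)$ can pull back to a function holomorphic at points of $C$ and nonconstant on $C$. For example, in the local model $xy=zw$, on the small resolution obtained by blowing up the plane $\{x=z=0\}$, the function $x/z=w/y$ lifts to the fibre coordinate of $C$. This happens globally whenever the small resolution is projective. So $\m(X)=\m(Y)$ proves nothing by itself. Mere non-projectivity is not the right hypothesis either: the paper's $B$ is non-projective yet is an \MM-manifold, because its obstruction $[\ell_0]+[m_0']=0$ involves two curves rather than one. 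You never fix an example, and you leave the actual obstruction as an expectation.

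What is missing is the mechanism of Proposition~\ref{torsionProp}. Suppose $f\in\m(X)$ is holomorphic near $a,b\in C$ with $f(a)\neq f(b)$. Then $C\not\subseteq\I(f)$ and $f|_C$ is nonconstant. For $\lambda$ a value of $f|_C$, the divisor $D_\lambda=\overline{f^{-1}(\lambda)}$ meets $C$ in a nonempty finite set, so $\langle c_1(\OO_X(D_\lambda)),[C]\rangle>0$. This is impossible when $[C]$ is torsion, or more generally when every divisor has degree $0$ on $C$. The paper then supplies a concrete curve of this kind: the null-homologous component $c_0$ of $\pi^{-1}(p)$ in Hironaka's blowup $A$ of a nodal curve.

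Your small-resolution idea can be completed along the same lines, but only with ingredients you have not provided. Take $Y$ with a single node that is $\mathbb{Q}$-factorial. Then for every divisor $D$ on $X$ some multiple satisfies $mD=\pi^*E$ with $E$ Cartier on $Y$: the two sides agree off $C$, which has codimension $2$. Hence $D\cdot C=0$. Alternatively, if $C$ is the only curve contracted by $\pi$ and $X$ is non-projective, no line bundle can have positive degree on $C$, since such a bundle would be $\pi$-ample and $X$ would be projective. Only after this, combined with the degree argument above, does it follow that $\m(X)$ fails to separate points of $C$.
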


To prove Theorem~\ref{proper}, we will furnish a Moishezon manifold $A$ that is not an \MM-manifold and a compact \MM-manifold $B$ which is not projective. Interestingly, both examples originate in constructions of Hironaka~\cite{Hi60, Hi62}, which were initially intended to show that a complete algebraic manifold need not be projective, and that a Moishezon manifold need not be algebraic. We recall both constructions below, following Appendix B of Hartshorne~\cite{Ha77}.

We first recall the construction of $A$. Let $c$ be an irreducible curve in a projective threefold $X$, which is nonsingular except for a single self-intersection point $p$ at which $c$ intersects itself nontangentially. Locally near $p$, the curve $c$ has two smooth branches with distinct tangent directions. Near $p$, blow up one branch, then (the strict transform of) the other; away from $p$, simply blow up $c$. These local constructions agree off of $p$ and hence glue together to a compact complex manifold $A$ with a proper modification $\pi:A\to X$. In particular, $A$ is Moishezon, since $\m(X)\cong\m(A)$. The fiber $\pi^{-1}(p)$ consists of two rational curves, and the component $c_{0}$ arising from the branch blown up first is null-homologous, i.e., $[c_{0}]=0$ in $H_{2}(A;\mathbb Z)$~\cite[Example B.3.4.2]{Ha77}. By Proposition~\ref{torsionProp} below, $A$ is not an \MM-manifold.

We now turn to the construction of $B$. Let $X$ be any nonsingular projective algebraic complex manifold of three complex dimensions. Take any two nonsingular curves $c_1,c_2\subset X$ which intersect nontangentially at precisely two distinct points $p,q$. For $k=1,2$, let $\sigma_k:X_k\to X$ denote the blow-up of $X$ along $c_k$, and for distinct $j,k\in \{1,2\}$ let $\sigma_{jk}:X_{jk}\to X_k$ denote the blow-up of $X_k$ along the strict transform of $c_j$. We write $\pi_{jk}=\sigma_k\circ\sigma_{jk}:X_{jk}\to X$ for the resulting proper modification. We note that $X_{1},X_2,X_{12}$, and $X_{21}$ are projective algebraic manifolds~\cite[Proposition II.7.16]{Ha77}. Next, $X_{12}$ and $X_{21}$ coincide away from the inverse images of $p$ and $q$, so we can glue $X_{12}$ and $X_{21}$ together there to obtain a nonsingular complete algebraic variety $B$ along with a proper modification $\pi: B\to X$ such that
\begin{equation}\label{gluing}
  B\setminus\pi^{-1}(q)\cong X_{12}\setminus\pi_{12}^{-1}(q)
  \quad\text{and}\quad
  B\setminus\pi^{-1}(p)\cong X_{21}\setminus\pi_{21}^{-1}(p),
\end{equation}
where under the first identification $\pi$ agrees with $\pi_{12}$, and under the second $\pi$ agrees with $\pi_{21}$. The variety $B$ is not projective: if $\ell_0$ and $m_0'$ denote suitable components of the fibres $\pi^{-1}(p)$ and $\pi^{-1}(q)$, then $[\ell_0]+[m_0']=0$ in $H_2(B;\mathbb Z)$, which is impossible on a projective manifold~\cite[Example B.3.4.1]{Ha77}.

\begin{prop}\label{Bprop}
    The nonsingular complete algebraic variety $B$ is an \MM-manifold.
\end{prop}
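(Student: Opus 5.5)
Since $B$ is compact, condition (a) of Definition~\ref{Mmanifold} holds automatically: as noted above, every compact complex manifold is meromorphically convex because $\widehat K_B$ is a closed subset of the compact space $B$. The plan is to verify (b) and (c) by transporting meromorphic functions from the projective manifolds $X$, $X_{12}$ and $X_{21}$ to $B$, using the identifications~\eqref{gluing}.

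\textbf{Step 1: transporting functions.} The maps $\pi:B\to X$ and $\pi_{jk}:X_{jk}\to X$ are proper modifications, so pullback gives isomorphisms
\[
\pi^*:\m(X)\to\m(B),\qquad \pi_{12}^*:\m(X)\to\m(X_{12}),\qquad \pi_{21}^*:\m(X)\to\m(X_{21}).
\]
Hence every $g\in\m(X_{12})$ equals $\pi_{12}^*h$ for a unique $h\in\m(X)$, and we set $\tilde g=\pi^*h\in\m(B)$. Under the first identification in~\eqref{gluing}, $\pi$ agrees with $\pi_{12}$. Therefore $\tilde g$ and $g$ coincide as meromorphic functions on the open set $B\setminus\pi^{-1}(q)\cong X_{12}\setminus\pi_{12}^{-1}(q)$. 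In particular, at any point of that open set, $\tilde g$ is holomorphic exactly where $g$ is, and it has the same value and the same differential there. The same construction works with $X_{21}$ on $B\setminus\pi^{-1}(p)$. I expect this compatibility to be the only point requiring care. It is essentially the statement that meromorphic functions are determined on a dense open set, together with the fact that $\pi$ and $\pi_{12}$ agree under the identification.

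\textbf{Step 2: separation of points, condition (b).} Let $a\neq b$ be points of $B$. We use the fact that projective manifolds are \MM-manifolds~\cite[\textsection 3]{BoSh26}, or directly that rational functions on a projective manifold separate points.
\begin{enumerate}
\item[(i)] If $a,b\notin\pi^{-1}(q)$, choose $g\in\m(X_{12})$ holomorphic near the images of $a$ and $b$ with distinct values there. By Step 1, $\tilde g$ is holomorphic near $a$ and $b$ and satisfies $\tilde g(a)\neq\tilde g(b)$.
\item[(ii)] If $a,b\notin\pi^{-1}(p)$, argue in the same way using $X_{21}$.
\item[(iii)] Otherwise, up to relabeling, $a\in\pi^{-1}(p)$ and $b\in\pi^{-1}(q)$. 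Since $p\neq q$, choose $h\in\m(X)$ holomorphic near $p$ and $q$ with $h(p)\neq h(q)$. Then $\pi^*h$ is holomorphic on a neighborhood of the compact fibers $\pi^{-1}(p)$ and $\pi^{-1}(q)$, and it takes the values $h(p)$ at $a$ and $h(q)$ at $b$.
\end{enumerate}

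\textbf{Step 3: local coordinates, condition (c).} Each point $z\in B$ lies in $B\setminus\pi^{-1}(q)$ or in $B\setminus\pi^{-1}(p)$; say the former. The projective manifold $X_{12}$ has rational functions $g_1,g_2,g_3$ forming holomorphic coordinates near the image of $z$, for instance affine coordinates of a projective embedding. By Step 1, $\tilde g_1,\tilde g_2,\tilde g_3$ coincide with $g_1,g_2,g_3$ near $z$, so they form local holomorphic coordinates on a neighborhood of $z$. The other case is handled symmetrically with $X_{21}$.

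Together, Steps 1–3 verify conditions (a)–(c) of Definition~\ref{Mmanifold}, so $B$ is an \MM-manifold.
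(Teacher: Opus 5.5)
Your proof is correct and follows essentially the same approach as the paper: you transport meromorphic functions from $X_{12}$ and $X_{21}$ to $B$ via $\m(X)\cong\m(X_{jk})$ and pullback by $\pi$, then verify (b) and (c) on the open sets $B\setminus\pi^{-1}(q)$ and $B\setminus\pi^{-1}(p)$. The only difference is a cosmetic case split in (b). The paper distinguishes $\pi(a)\neq\pi(b)$ from $\pi(a)=\pi(b)$, while you distinguish whether $a,b$ lie in a common open set and handle the remaining case ($a\in\pi^{-1}(p)$, $b\in\pi^{-1}(q)$) by pulling back from $X$; both splits are exhaustive and valid.
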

\begin{proof}
    Since $B$ is compact, condition (a) of Definition~\ref{Mmanifold} is immediate, as noted above. It therefore remains only to verify conditions (b) and (c). For brevity, set $B_p=B\setminus\pi^{-1}(q)$ and $B_q=B\setminus\pi^{-1}(p)$, so that $B=B_p\cup B_q$, and identify $B_p$ and $B_q$ with open subsets of $X_{12}$ and $X_{21}$, respectively, via~\eqref{gluing}.

    We first make the following observation: for every $f\in\m(X_{12})$ there exists $h\in\m(B)$ such that $h=f$ on $B_p$. Indeed, the proper modification $\pi_{12}:X_{12}\to X$ induces an isomorphism $\m(X)\to\m(X_{12})$, $g\mapsto g\circ\pi_{12}$, so there exists a unique $g\in\m(X)$ with $g\circ\pi_{12}=f$. Define $h=g\circ\pi\in\m(B)$. Since $\pi$ agrees with $\pi_{12}$ on $B_p$, we have $h=g\circ\pi_{12}=f$ on $B_p$. The same argument, with $X_{21}$ and $\pi_{21}$ in place of $X_{12}$ and $\pi_{12}$, shows that every $f\in\m(X_{21})$ extends to some $h\in\m(B)$ with $h=f$ on $B_q$.

    We now verify condition (b). Let $a,b\in B$ be distinct points. Suppose first that $\pi (a)\neq \pi (b)$. The manifold $X$ is projective and hence an \MM-manifold~\cite[\textsection 3]{BoSh26}, so there exists $g\in \m(X)$ that is holomorphic near $\pi(a)$ and $\pi(b)$ with $g(\pi(a))\neq g(\pi(b))$. It follows that $g\circ\pi \in\m(B)$ is holomorphic near $a$ and $b$ and separates $a$ and $b$.

    Suppose now that $\pi(a)=\pi(b)$. Then $a$ and $b$ lie in the same fibre of $\pi$, and since $p\neq q$, either $a,b\in B_p$ or $a,b\in B_q$. For definiteness, suppose that $a,b\in B_p$; the other case is handled identically. Since $X_{12}$ is projective and hence an \MM-manifold, there exists $f\in\m(X_{12})$ which is holomorphic near $a$ and $b$ with $f(a)\neq f(b)$. By the observation above, there exists $h\in\m(B)$ with $h=f$ on $B_p$. Then $h$ is holomorphic near $a$ and $b$ and $h(a)\neq h(b)$, so $h$ separates $a$ and $b$.

    Finally, we verify condition (c). Fix a point $a\in B$. Then $a\in B_p$ or $a\in B_q$, and for definiteness we suppose that $a\in B_p$. Since $X_{12}$ is an \MM-manifold, there exist $f_{1},f_{2},f_{3}\in\m(X_{12})$ which form a local holomorphic coordinate system near $a$ on $X_{12}$. By the observation above, there exist $h_{1},h_{2},h_{3}\in\m(B)$ with $h_j=f_j$ on $B_p$ for $j=1,2,3$. In particular, $h_j$ agrees with $f_j$ near $a$, and so $h_{1},h_{2},h_{3}$ form a local holomorphic coordinate system on $B$ near $a$. This completes the proof.
\end{proof}

\begin{prop}\label{torsionProp}
    If a compact complex manifold $X$ contains an irreducible compact curve $c$ whose fundamental class $[c]\in H_{2}(X;\mathbb Z)$ is a torsion element, then $X$ is not an \MM-manifold.
\end{prop}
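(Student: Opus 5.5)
The idea is to use condition (b) of Definition~\ref{Mmanifold} to produce a line bundle on $X$ that has strictly positive degree on $c$. This contradicts the assumption that $[c]$ is torsion, since a first Chern class pairs to zero with torsion classes. Only condition (b) will be needed, and compactness of $X$ plays no further role beyond $c$ being compact.

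Suppose, for contradiction, that $X$ is an \MM-manifold. Pick two distinct smooth points $a,b$ of $c$. By (b) there is $f\in\m(X)$ that is holomorphic near $a$ and $b$ with $f(a)\neq f(b)$. In particular $c$ is not contained in the polar set of $f$, and $f|_c$ is nonconstant. Let $\nu:\tilde c\to c$ be the normalization, a compact Riemann surface. Then $f\circ\nu$ is a well-defined nonconstant meromorphic function on $\tilde c$. It is defined on the dense set where $\nu(\tilde c)$ avoids the indeterminacy locus and extends meromorphically across the finitely many remaining points. Being nonconstant on a compact curve, $f\circ\nu$ has at least one pole.

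Next, let $P$ be the polar divisor of $f$. Locally we can write $f=g/h$ with $g,h$ holomorphic and coprime (the local rings are UFDs), and $P$ is given locally by $h=0$. Set $L=\OO(P)$ with its canonical section $s$, so that $s^{-1}(0)=P$. Since $c\not\subseteq \operatorname{supp}P$, each local equation $h\circ\nu$ is a holomorphic function on $\tilde c$ that is not identically zero. Hence $\nu^*P$ is a well-defined effective divisor on $\tilde c$, and $\deg \nu^*L=\deg\nu^*P$.

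Moreover, $f\circ\nu=(g\circ\nu)/(h\circ\nu)$ locally, so every pole of $f\circ\nu$ is a zero of $h\circ\nu$, with multiplicity no larger. Thus $\nu^*P$ dominates the (nonzero) polar divisor of $f\circ\nu$, which gives
\[
\langle c_1(L),[c]\rangle=\deg\nu^*L=\deg \nu^*P>0 .
\]

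Finally, $c_1(L)\in H^2(X;\mathbb Z)$, and the evaluation pairing $H^2(X;\mathbb Z)\times H_2(X;\mathbb Z)\to\mathbb Z$ vanishes on torsion classes: if $m[c]=0$, then $m\langle c_1(L),[c]\rangle=0$ in $\mathbb Z$. This contradicts the strict positivity above, so $X$ is not an \MM-manifold.

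The step I expect to need the most care is the middle one, namely making precise that $\deg\nu^*L$ equals the pairing $\langle c_1(L),[c]\rangle$ and is positive. This requires handling points of $c$ that lie in the indeterminacy locus of $f$ or at singular points of $c$. I would handle it by working throughout on the normalization with the local coprime representation $f=g/h$, as above, rather than with the divisor of $f|_c$ directly.
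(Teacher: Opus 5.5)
Your proposal is correct and is essentially the paper's argument: you find an effective divisor on $X$ that does not contain $c$ but meets it, so that $\langle c_1(\OO_X(D)),[c]\rangle=\deg\nu^*\OO_X(D)>0$, which contradicts $[c]$ being torsion. The differences are minor. The paper first shows that every $f\in\m(X)$ is either constant on $c$ or indeterminate along $c$, and uses $D_\lambda$ for an arbitrary value $\lambda$ of $f\circ\nu$; you instead apply (b) directly, take $\lambda=\infty$, and prove positivity explicitly through $\nu^*P\geq (f\circ\nu)_\infty\neq 0$.
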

\begin{proof}[Proof (cf.~{\cite[pp.~8--9]{BoSh26}})]
    Let $\nu:\tilde{c}\to c$ be the normalization of $c$, so that $\tilde c$ is a compact connected Riemann surface. We claim that every $f\in\m(X)$ is either constant on $c$ or indeterminate along all of $c$, i.e., $c\subseteq\I(f)$.

    Seeking a contradiction, suppose that $f\in\m(X)$ satisfies $c\not\subseteq\I(f)$ and $f|_{c}$ is nonconstant. Then $f\circ\nu$ is a nonconstant meromorphic function on $\tilde c$, i.e., a nonconstant holomorphic map $\tilde c\to\mathbb{CP}^1$. Choose a point $\lambda$ in the image of $f\circ\nu$, and let $D_{\lambda}$ be the effective divisor on $X$ given by the closure of $f^{-1}(\lambda)$ in $X$ (the zero divisor of $f-\lambda$ if $\lambda\neq\infty$, and the polar divisor of $f$ if $\lambda=\infty$). Then $D_\lambda$ does not contain $c$, since $f|_c$ is nonconstant, and $D_{\lambda}\cap c\neq\emptyset$ by the choice of $\lambda$ (note that every point of $\I(f)$ lies on $D_{\lambda}$). Hence $D_{\lambda}$ meets $c$ in a nonempty finite set, and therefore
    \begin{equation*}
        \langle c_{1}(\OO_{X}(D_{\lambda})),[c]\rangle=\deg\, \nu^{*}\OO_{X}(D_{\lambda})>0,
    \end{equation*}
    the degree being the number of intersection points of $D_\lambda$ and $c$ counted with multiplicities. On the other hand, $\langle c_{1}(\OO_{X}(D_{\lambda})),[c]\rangle=0$ since $[c]$ is a torsion element of $H_2(X;\mathbb Z)$. This contradiction proves the claim.

    It follows that no $f\in\m(X)$ can be holomorphic near two distinct points $a,b\in c$ and satisfy $f(a)\neq f(b)$. Thus $\m(X)$ does not separate points of $c$, condition (b) of Definition~\ref{Mmanifold} fails, and $X$ is not an \MM-manifold.
\end{proof}

Since $A$ is Moishezon but not an \MM-manifold, and $B$ is a nonprojective compact \MM-manifold, this completes the proof of Theorem~\ref{proper}.

\bigskip

We note that the manifold $B$ is an example of an \MM-manifold that is not K\"ahler.
We do not know whether the converse to Proposition~\ref{torsionProp} holds for Moishezon manifolds; i.e., whether every compact Moishezon manifold that is not an \MM-manifold contains an irreducible closed curve whose fundamental class in $H_2(X;\mathbb Z)$ is a torsion element. This remains to be an open question.

\end{document}